\newtheorem{teo}{Theorem}
\newtheorem{rmk}{Remark}
\newtheorem{prop}{Proposition}
\newtheorem{clly}{Corollary}
\newtheorem{lemma}{Lemma}
\newtheorem{dfn}{Definition}
\newcommand{\R}{{\mathbb{R}}}
\newcommand{\Z}{{\mathbb{Z}}}
\newcommand{\N}{{\mathbb{N}}}
\def\blacksquare{\hbox{\vrule width 4pt height 4pt depth 0pt}}
\def\qed{\ \ \ \hbox{}\nolinebreak\hfill $\blacksquare \  \  \  \  $ \par{}\medskip}
\begin{document}
	\title[Top. of global attractors for homeos with TSP.]{Topology of  global attractors for  homeomorphisms with the topological shadowing property in $\R^m$.}
	\author[1]{Gonzalo Cousillas $^1$}	 
	\address[$^1$]{Departamento de Matemática y Aplicaciones, Centro Universitario Regional Este, Universidad de la República, Maldonado, Uruguay.}
	\email{gonzalo.cousillas@cure.edu.uy}
	\thanks{Corresponding author: gonzalo.cousillas@cure.edu.uy}

	\author[]{Jorge Groisman $^2$}
	\address[$^2$]{ Instituto de Matemática y Estadística ``Rafael Laguardia'', Facultad de Ingenier\'{\i}a,  Universidad de la Rep\'ublica, Montevideo, Uruguay.}
	\email{jorgeg@fing.edu.uy}
	
	\keywords{topological shadowing property, global attractor, homothety.}
	\subjclass[2020]{Primary: 37B65, 37B35.	}
	\maketitle
	
\begin{abstract}  We study the dynamics of  homeomorphisms with the  topological shadowing property and a  global attractor in $\R^m$. 
	We prove that under these hypothesis the attractor is trivial.		
\end{abstract}

\section{Introduction.}

The shadowing property (or pseudo-orbit tracing property) is a cornerstone of modern dynamical systems theory. It provides the essential theoretical justification for numerical simulations: in systems possessing this property, a pseudo-orbit—a sequence of points generated with small, persistent computational errors—is ``shadowed'' by a true, exact trajectory of the system. Consequently, the qualitative behavior observed in simulations remains a true representation of the system's underlying dynamics.

The concept originated in the 1960s with the work of Anosov \cite{An67}, who demonstrated its role in the structural stability of certain diffeomorphisms. Anosov established that in hyperbolic systems, where the tangent bundle admits a continuous splitting into stable and unstable subspaces, every pseudo-orbit stays remarkably close to a unique physical orbit.

In 1975, Bowen \cite{Bo75} transformed this observation into a fundamental tool for the study of Axiom A diffeomorphisms. Using the shadowing property, Bowen showed that complex continuous systems could be modeled via simpler symbolic dynamics (such as subshifts of finite type), establishing shadowing as a subject of intrinsic interest. Subsequently, Walters \cite{Wa78} redirected the focus toward the interplay between shadowing and stability. A key requirement in this context is expansivity—the property that any two distinct orbits eventually separate by a fixed distance. Walters proved that for an expansive homeomorphism on a compact metric space, the shadowing property implies topological stability, ensuring the system's qualitative features remain invariant under small perturbations.



In the present paper, we examine shadowing within a topological setting where the map is a homeomorphism and the ambient space is non-compact. In the absence of the regularity typically in a compact context, we adopt the generalized definition of  shadowing proposed in \cite{DLRW13} for non-metrizable spaces, or equivalently,  for non-compact metric spaces as defined in \cite{LNY18}. This is called {\it topological shadowing property}. These works also introduce {\it topological expansivity}, a notion that also generalizes the classical definition while remaining—both of these—invariant under topological conjugacy.

Recent literature has utilized these definitions to extend classical results to non-compact domains. Notably, versions of Smale’s Spectral Decomposition Theorem and Walters’ Stability Theorem have been established for ``topologically Anosov'' homeomorphisms—those that are both topologically expansive and possess the topological shadowing property. Furthermore, in \cite{CGX21} provided a classification for such homeomorphisms on surfaces of genus zero and finite type, proving they are conjugate to either a homothety or a reverse homothety on the plane.

In this work, we diverge from the assumption of topological expansivity. Instead, we assume the existence of a compact global attractor. 
We investigate the topological structure of the global attractor for homeomorphisms defined on $\R^m$. 
Our approach demonstrates that the presence of a global attractor, even in the absence of topological expansivity, imposes rigid constraints on the system's configuration. In particular, we characterize the relationship between the topological shadowing property and the shape of the attractor, culminating in the main result:

\begin{teo}[\bf Main Theorem]\label{K_trivial}   Let $f$ be a homeomorphism of $\R^m$ ($m\geq 2$) with the topological shadowing property. If $f$ possesses a compact global attractor $K$, then $K$ must be trivial.
\end{teo}

The remainder of this paper is organized as follows. In Section \ref{prel}, we establish the Preliminaries, providing the foundational definitions for the topological shadowing property and reviewing essential results from the existing literature on global attractors. Section \ref{at_trivial} explores the interplay between global attractors and the topological shadowing property, specifically how the existence of an attractor restricts  the behavior of orbits near the attractor.
In Section \ref{main_thm}, we provide conditions for the triviality of the global attractor, demonstrating that under certain topological constraints, the attractor must reduce to a single fixed point. Finally, Section \ref{one_dim}  is dedicated to the One-Dimensional Case; here, we provide a specific counterexample.

\section{Preliminaries}\label{prel}
	
	\subsection{Basic definitions and previous results}
	
	Let $(X,d)$ be a metric space and let $A\subset X$. We denote by $\overline{A}$ the closure of $A$.  
	Let $f:X\to X$ be a homeomorphism. A point $x\in X$ is said to be \emph{Lyapunov stable} if for every $\epsilon>0$ there exists $\delta>0$ such that $d(x,y)<\delta$ implies 
	\[
	d(f^n(y), f^n(x))<\epsilon \quad \text{for all } n\in\N.
	\]
	It is easy to see, for instance, that under a homothethy of $\R^m$ every point is Lyapunov stable. We denote $\mathcal{O}(x)=\{f^n(x): n\in \Z\}$ the orbit of $x$, $\mathcal{O}^+(x)=\{f^n(x): n\in \Z^+\}$ the forward orbit and $\mathcal{O}^-(x)=\{f^n(x): n\in \Z^-\}$ the backward orbit.
	
	The \emph{$\omega$-limit set} of a point $x\in X$ is defined by
	\[
	\omega(x)=\bigcap_{n=0}^\infty \overline{\{f^k(x): k\geq n\}}.
	\]
	The \emph{$\alpha$-limit set} of $x$ is the $\omega$-limit set of $x$ with respect to $f^{-1}$.
	
	

	We now introduce the concept of attractor that will be used throughout this paper. Our goal is to show that if $K\subset \R^m$ (with $m\geq2$) is a global attractor, then $\R^m\setminus K$ has exactly one connected component.
	
	\begin{dfn}\label{def_atractor} 
		Let $f:\R^m\to \R^m$ be a homeomorphism and $K\subset \R^m$ a compact subset.
		We say that $K$ is an \emph{ attractor} if there exists a bounded open neighborhood $U$ of $K$ such that $\overline{f(U)}\subset U$ and
		\[
		K=\bigcap_{n>0} f^n(U).
		\]
		The \emph{basin of attraction} of $K$ is the set $B=\bigcup_{n\geq0} f^{-n}(U)$.  
		If $K$ is an attractor and moreover $B=\R^m$, then $K$ is called a \emph{ global attractor}.
	\end{dfn}
	
	Note that every  attractor $K$ is invariant. If $K$ is a  global attractor, then  $\{f^n(U)\}_{n\in\N}$ is a nested sequence of sets for  $U$ as in Definition~\ref{def_atractor}. Setting $A_n=\overline{f^n(U)}\setminus \overline{f^{n+1}(U)}$, we obtain the disjoint union
	\[
	\R^m=\left(\bigcup_{n\in \Z} A_n\right)\cup K.
	\]
	
	Furthermore, it is straightforward to verify that under this definition,  $K$ attracts bounded sets. That is,  for every compact set $C$  and every  $\epsilon>0$, there exists $n_0\in \N$ such that $f^{n}(C)\subset B(K,\epsilon)$ for all $n\geq n_0$, where $B(K, \epsilon)=\bigcup_{x\in K} B(x,\epsilon)$. 
	
	\begin{rmk}\label{K_sin_agujeros}
		If $K$ is a  global attractor, then: 
	\begin{enumerate} \item $\omega(x)\subset K$ for every $x\in \R^m$.
		 \item $\alpha(x)=\emptyset$ for every $x\in \R^m\setminus K$.
	\end{enumerate}	 
	\end{rmk} 
	
	\begin{proof}$\,$

	\begin{enumerate}
\item		Let $x\in \R^m\setminus K$. Since $\overline{\mathcal{O}^+(x)}$ is bounded, then  $\omega(x)\subset \overline{\mathcal{O}^+(x)}$. Since  $K$ attracts bounded sets, it attracts $\omega(x)$ as well. By the definition of $\omega(x)$, it is not hard to conclude that $\omega(x)\subset K$.  
		
	\item 	Now suppose that $\alpha(x)\neq \emptyset$ for some $x\in \R^m\setminus K$. Let $y\in\alpha(x)$. Then $y\notin K$, therefore $y\in A_{n_0}$ for some $n_0$. Since $\alpha(x)$ is closed and invariant, $\omega(y)\subset\alpha(x)$. This contradicts the fact that $\omega(y)\subset K$. Hence $\alpha(x)=\emptyset$.
	\end{enumerate}
	\end{proof}
	
	In case $K=\{p\}$ for some $p\in \R^m$, we denote $K$ a \emph{trivial  attractor}.  
	
	The following lemma, due to Hale, establishes connectedness of global attractors in Banach spaces:
	
	\begin{lemma}\cite{Ha88}\label{K_conexo}
		If $X$ is a Banach space and $K$ is a compact invariant set which attracts compact sets, then $K$ is connected.
	\end{lemma}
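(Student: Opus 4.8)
The plan is to argue by contradiction, exploiting the fact that the closed convex hull of a compact set in a Banach space is again compact (Mazur's theorem) and, being convex, connected. Suppose $K$ is not connected. Then I can write $K = K_1 \cup K_2$ with $K_1,K_2$ nonempty, compact and disjoint, so that $\rho := d(K_1,K_2) > 0$. I would then set $\widehat{K} := \overline{\mathrm{co}}(K)$, the closed convex hull of $K$ in $X$; by Mazur's theorem $\widehat{K}$ is compact, it is connected because it is convex, and clearly $K \subset \widehat{K}$.

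Next I use that $K$ attracts compact sets, applied to the compact set $\widehat{K}$: for $\epsilon = \rho/3$ there is $n_0$ with $f^n(\widehat{K}) \subset B(K,\rho/3)$ for all $n \geq n_0$. The key elementary observation is that $B(K,\rho/3) = B(K_1,\rho/3) \cup B(K_2,\rho/3)$ and these two open sets are disjoint, since $d(K_1,K_2) = \rho > 2\rho/3$. So $\{B(K_1,\rho/3),\,B(K_2,\rho/3)\}$ is a separation of the open set $B(K,\rho/3)$.

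Now fix some $n \geq n_0$. The set $f^n(\widehat{K})$ is the continuous image of the connected set $\widehat{K}$, hence connected, and it is contained in the disjoint union $B(K_1,\rho/3) \cup B(K_2,\rho/3)$; therefore it lies entirely in one of the two pieces, say $f^n(\widehat{K}) \subset B(K_1,\rho/3)$. But $K$ is invariant and $K \subset \widehat{K}$, so $K = f^n(K) \subset f^n(\widehat{K}) \subset B(K_1,\rho/3)$, which would force $K_2 \subset B(K_1,\rho/3)$, contradicting $d(K_1,K_2)=\rho$. Hence $K$ must be connected.

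I expect the only genuinely nontrivial ingredient to be the compactness of $\overline{\mathrm{co}}(K)$: this is exactly where the Banach space hypothesis (completeness) is used, and without it the convex hull of a compact set may fail to be precompact, breaking the argument. Everything after that — the disjointness of the two $\rho/3$-neighborhoods, the trapping of a connected set in one component, and the use of invariance $f^n(K)=K$ — is routine.
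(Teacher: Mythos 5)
Your proof is correct: the paper itself gives no argument for this lemma, quoting it directly from \cite{Ha88}, and your reasoning is precisely the standard one found in that reference — compactness of $\overline{\mathrm{co}}(K)$ via Mazur's theorem (the only place the Banach/completeness hypothesis enters), connectedness of the convex hull, and then attraction of the compact set $\overline{\mathrm{co}}(K)$ together with invariance $f^n(K)=K$ trapping $K$ inside a single $\rho/3$-neighborhood of one piece of the assumed separation, a contradiction. Nothing is missing; the separation into two compact pieces at positive distance and the disjointness of their $\rho/3$-neighborhoods are handled correctly.
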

	
	Therefore a  global attractor $K$ in $\R^m$ is connected.  
	
	\begin{rmk}\label{no_agujeros} $\R^m\setminus K$ has no bounded connected components.
	\end{rmk}
		Indeed, suppose $C=\bigcup C_\lambda$ is the union of bounded components of $\R^m\setminus K$ and $y\in C$, then the backward orbit of $y$ is contained in $C$ and hence it is bounded. This  implies $\alpha(y)\neq \emptyset$, contradicting Remark~\ref{K_sin_agujeros}.

	
	
	We now prove that if $m\geq2$, then $\partial K$ is connected. For this we use the following result:
	
	\begin{prop}\label{interseccion_comp_conexo}
	Let $X$ be a compact metric space and let $\{E_n\}_{n\in \N}$ be a sequence of nonempty compact connected sets of $X$.  
		Let $\{x_n\}_{n\in \N}$ be a sequence with $x_n\in E_n$ for all $n$ such that  $x_n\to x$.  
		Then
		\[
		E=\bigcap_{j=1}^{+\infty} \overline{\bigcup_{n=j}^{+\infty} E_n}
		\]
		is a nonempty compact connected set containing $x$.
	\end{prop}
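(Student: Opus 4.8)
The plan is to verify the three properties of $E$ in turn, the non-emptiness and compactness being routine and the connectedness being the heart of the matter. First, note that $F_j := \overline{\bigcup_{n\geq j} E_n}$ is a decreasing sequence of nonempty closed subsets of the compact space $M$, hence each $F_j$ is compact; the intersection $E = \bigcap_j F_j$ of a decreasing sequence of nonempty compacta is nonempty and compact. Moreover, for each $j$ the point $x$ lies in $F_j$: since $x_n \to x$ and $x_n \in E_n \subset \bigcup_{n\geq j} E_n$ for all $n \geq j$, we have $x \in \overline{\bigcup_{n\geq j} E_n} = F_j$. Hence $x \in E$, which also re-proves $E \neq \emptyset$.

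The main work is to show $E$ is connected. I would argue by contradiction: suppose $E = E' \sqcup E''$ with $E', E''$ nonempty, closed (hence compact), and disjoint. In a compact metric space two disjoint compacta have positive distance, so pick disjoint open sets $V', V''$ with $E' \subset V'$, $E'' \subset V''$ and $\overline{V'} \cap \overline{V''} = \emptyset$. The key claim is that $F_j \subset V' \cup V''$ for all sufficiently large $j$. Indeed, if not, there is a strictly increasing sequence $j_1 < j_2 < \cdots$ and points $p_k \in F_{j_k} \setminus (V' \cup V'')$; since $M$ is compact, pass to a subsequence so that $p_k \to p$. Then $p \in M \setminus (V' \cup V'')$ (the complement is closed), but also $p \in F_{j}$ for every $j$ because the $F_j$ are decreasing and each tail of $(p_k)$ lies in a given $F_j$; hence $p \in E \setminus (V' \cup V'')$, contradicting $E \subset V' \cup V''$. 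So fix such a $j$. Now each $E_n$ with $n \geq j$ is connected and contained in $F_j \subset V' \cup V''$ with $V', V''$ disjoint open sets, so $E_n$ lies entirely in $V'$ or entirely in $V''$. Since $x \in E' \subset V'$ and $x_n \to x$ with $x_n \in E_n$, for all large $n$ we get $x_n \in V'$, forcing $E_n \subset V'$ for all large $n$, say $n \geq j' \geq j$. But then $\bigcup_{n \geq j'} E_n \subset V'$, hence $F_{j'} = \overline{\bigcup_{n\geq j'} E_n} \subset \overline{V'}$, which is disjoint from $V'' \supset E''$; therefore $E'' \subset E \subset F_{j'}$ is empty, a contradiction. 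This forces $E$ to be connected.

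I expect the one subtle point to be the claim that $F_j \subset V' \cup V''$ for $j$ large; it is the place where compactness of $M$ is genuinely used, and it must be argued carefully since $V' \cup V''$ is not closed. Everything else — compactness of intersections of nested compacta, separation of disjoint compacta in a metric space, and the observation that a connected set contained in a disjoint union of two open sets lies in one of them — is standard point-set topology. Once these pieces are assembled in the order above, the proof is complete. \qed
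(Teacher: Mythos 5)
Your proof is correct. The paper actually states Proposition~\ref{interseccion_comp_conexo} without giving any proof (it is the classical fact that the topological upper limit $\bigcap_j \overline{\bigcup_{n\geq j}E_n}$ of a sequence of connected compacta is connected whenever its lower limit is nonempty, as witnessed here by $x_n\to x$; see e.g.\ Kuratowski or Whyburn), so there is no argument in the paper to compare against; your argument is the standard one, and every step checks out: nested nonempty compacta give nonemptiness and compactness, $x\in F_j$ for all $j$ gives $x\in E$, and the separation argument (disjoint open $V',V''$ with disjoint closures, the compactness claim that $F_j\subset V'\cup V''$ for large $j$, connectedness of each $E_n$, and $x_n\to x$ forcing $E_n\subset V'$ eventually) correctly yields $E''=\emptyset$. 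Two cosmetic points only: say ``without loss of generality $x\in E'$'' before using it, and since the $F_j$ are nested, the key claim is simply that $F_j\subset V'\cup V''$ for \emph{some} $j$; neither affects correctness.
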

	
	\begin{lemma}\label{partial_K_conexo} 
		Let $f:\R^m\to \R^m$ be a homeomorphism with $m\geq 2$ and let $K\subset\R^m$ be a  global attractor. Then $\partial K$ is connected.
	\end{lemma}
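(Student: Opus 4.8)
The plan is to realise $\partial K$ as the limiting continuum of the boundaries of a shrinking family of well-embedded neighbourhoods of $K$ and then invoke Proposition~\ref{interseccion_comp_conexo}. First, recall that $K$ is connected by Lemma~\ref{K_conexo} and that $\R^m\setminus K$ has no bounded component by Remark~\ref{no_agujeros}, so $\R^m\setminus K$ is connected, open and unbounded. Fix $U$ as in Definition~\ref{def_atractor}; replacing $U$ by the connected component that contains $K$, I may assume $U$ connected, and then each $f^n(U)$ ($n\ge 0$) is open, bounded, satisfies $K\subseteq\overline{f^{n+1}(U)}\subseteq f^n(U)$ and $\bigcap_n f^n(U)=K$, with $\overline{f^n(U)}\to K$ in the Hausdorff metric since $K$ attracts the bounded set $\overline U$. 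Let $G_n$ be the unbounded connected component of $\R^m\setminus f^n(U)$ and put $W_n:=\R^m\setminus G_n$, the ``filled'' neighbourhood of $K$. Then $W_n$ is open, bounded, connected, contains $K$, has connected complement $G_n$, satisfies $W_{n+1}\subseteq W_n$ (because $\R^m\setminus f^n(U)\subseteq\R^m\setminus f^{n+1}(U)$ forces $G_n\subseteq G_{n+1}$), and $\partial W_n\cap K=\emptyset$ since $W_n$ is open and $K\subseteq W_n$.

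I would then check the two facts $\bigcap_n W_n=K$ and $\overline{W_n}\to K$ in the Hausdorff metric. For the first: fix $R>\sup_{y\in U}|y|$, so that $\{|x|>R\}\subseteq G_n$ for every $n$; given $z\notin K$, join $z$ to some $w$ with $|w|>R$ by an arc $\Gamma$ in the connected open set $\R^m\setminus K$; since $d(\Gamma,K)>0$ and $\overline{f^n(U)}\to K$, for large $n$ one has $\Gamma\cap f^n(U)=\emptyset$, so $z$ and $w$ lie in the same component of $\R^m\setminus f^n(U)$, namely $G_n$, i.e. $z\notin W_n$. The Hausdorff convergence then follows: if $z_k\in W_{n_k}$ with $n_k\to\infty$ and $d(z_k,K)\ge\epsilon_0$, a subsequential limit $z$ would satisfy $z\notin K$, hence $z\notin W_N$ for some $N$, contradicting $z_k\in W_{n_k}\subseteq W_N$ for large $k$ because $\R^m\setminus W_N$ is open. (This step, where filling is reconciled with $\bigcap_n W_n=K$, is exactly where the absence of bounded complementary components of $K$ enters.)

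Now set $E_n:=\partial W_n$, a nonempty compact set which is moreover \emph{connected}: $W_n$ is the bounded complementary domain of the continuum $(\R^m\setminus W_n)\cup\{\infty\}$ in $S^m=\R^m\cup\{\infty\}$, and a complementary domain of a continuum in $S^m$ has connected boundary (for $m=2$ this is classical planar topology; in general it follows, e.g., from a Mayer--Vietoris computation after approximating $W_n$ by a regular neighbourhood). Since all the $E_n$ lie in the compact set $\overline{W_0}$, I can apply Proposition~\ref{interseccion_comp_conexo} there. Given $x\in\partial K$ and $\epsilon>0$, pick $z\notin K$ with $d(x,z)<\epsilon$ and set $\rho:=d(z,K)>0$; for large $n$ one has $\overline{W_n}\subseteq B(K,\rho/2)$, hence $z\in B(x,\epsilon)\setminus W_n$ while $x\in K\subseteq W_n$, so the connected set $B(x,\epsilon)$ meets $\partial W_n=E_n$. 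A diagonal argument over $\epsilon=1/k$ produces, for each $x\in\partial K$, a sequence $x_n\in E_n$ with $x_n\to x$, so Proposition~\ref{interseccion_comp_conexo} yields that $E:=\bigcap_{j\ge1}\overline{\bigcup_{n\ge j}E_n}$ is a nonempty compact connected set with $x\in E$; as $x\in\partial K$ was arbitrary, $\partial K\subseteq E$. Conversely, any $p\in E$ is a limit of points $y_k\in\partial W_{n_k}$ with $n_k\to\infty$, and $y_k\notin K$ together with $y_k\in\overline{W_{n_k}}\to K$ forces $p\in K\cap\overline{\R^m\setminus K}=\partial K$. Hence $\partial K=E$ is connected.

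The step I expect to be the main obstacle is the construction and handling of the filled neighbourhoods $W_n$ together with the topological input that a complementary domain of a continuum in $S^m$ has connected boundary; this connectedness is precisely what makes the choice $E_n=\partial W_n$ admissible in Proposition~\ref{interseccion_comp_conexo}. Everything else is routine bookkeeping with Hausdorff limits of the nested sets $W_n$.
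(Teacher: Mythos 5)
Your proof follows essentially the same route as the paper: your sets $\partial W_n$ coincide with the paper's $f^n(E_0)$, where $E_0$ is the boundary of the unbounded component of $\R^m\setminus U$, and both arguments conclude via Proposition~\ref{interseccion_comp_conexo}; you simply make explicit the points the paper leaves implicit (replacing $U$ by its component containing $K$ so that the boundaries are connected, the unicoherence-type fact that a complementary domain of a continuum in $S^m$ has connected boundary, and the verification that $\partial K$ actually equals the limit set $E$). One justification as written is wrong: in the Hausdorff-convergence step you argue that $z\notin W_N$ contradicts $z_k\to z$, $z_k\in W_{n_k}\subseteq W_N$, ``because $\R^m\setminus W_N$ is open''; in fact $\R^m\setminus W_N=G_N$ is closed, not open, so you would need $z\notin\overline{W_N}$, and this matters because the inclusion $E\subseteq\partial K$ (which the connectedness conclusion genuinely requires) rests on it. The repair is a one-line strengthening of your own argument for $\bigcap_n W_n=K$: given $z\notin K$, connect a small closed ball $\overline{B}(z,\rho)\subset\R^m\setminus K$, rather than just the point $z$, to a point $w$ with $|w|>R$ by an arc in $\R^m\setminus K$; this compact connected set has positive distance to $K$, hence misses $f^n(U)$ for all large $n$ and therefore lies in $G_n$, so $B(z,\rho)\cap W_n=\emptyset$ for large $n$, which yields $\bigcap_n\overline{W_n}=K$ and the Hausdorff convergence $\overline{W_n}\to K$ that your final step uses.
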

	
	\begin{proof}
		By Lemma~\ref{K_conexo}, $K$ is connected  and by Remark~\ref{no_agujeros}, $\R^m\setminus K$ has no bounded components.  
		Let $U$ be the neighborhood from Definition~\ref{def_atractor} and denote by $B_0$ the unbounded component of $\R^m\setminus U$. Set $E_0=\partial B_0$. Then
		\[
		\partial K=\bigcap_{j=1}^{+\infty} \overline{\bigcup_{n=j}^{+\infty} f^n(E_0)}.
		\]
		Setting  $E_n=f^n(E_0)$, then $E_n$ is  a compact and connected set. By Proposition~\ref{interseccion_comp_conexo} it follows that $\partial K$ is connected.
	\end{proof}


	

	

	
	\section{Global attractors and the topological shadowing property}\label{at_trivial}
	
	In this section we investigate how the topological shadowing property constraints the dynamics near a global attractor.  
	We will show that if $f:\R^m\to \R^m$ is a homeomorphism with the topological shadowing property and $K\subset \R^m$ is a global attractor, then every boundary point of $K$ is Lyapunov stable.  
	
	
	We begin with a lemma that ensures that backward iterates of distinct points eventually separate at a definite scale. In other words, there exists $\epsilon\in C^+(\R^m)$ such that $f$ is $\epsilon$-expansive in $\R^m\setminus K$.
	
	\begin{lemma}\label{epsilon_expansivo_gral}
		Let $f:\R^m\to \R^m$ be a homeomorphism and let $K\subset \R^m$ be a global attractor.  
		There exists $\epsilon\in C^+(\R^m)$ such that for every $x\in \R^m\setminus K$ and every $y\neq x$, there exists $n_0\in \N$ with  
		\[
		d\big(f^{-n_0}(x), f^{-n_0}(y)\big)>\epsilon(f^{-n_0}(x)).
		\]
	\end{lemma}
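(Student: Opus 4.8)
The plan is to construct $\epsilon$ piece by piece on the ``annuli'' $A_n = \overline{f^{-n}(U)}\setminus\overline{f^{-n+1}(U)}$ that exhaust $\R^m\setminus K$, choosing the values of $\epsilon$ small enough, and decreasing fast enough as $n\to\infty$, that two distinct points cannot stay $\epsilon$-close along their entire backward orbits. Fix $U$ from the definition of the stable global attractor; since $\overline{f^{-n+1}(U)}\subset f^{-n}(U)$, the closed sets $A_n$ are compact, $\R^m\setminus K = \bigcup_{n\geq 1} A_n$, and $f^{-1}(A_n)\subset A_{n-1}\cup A_n\cup A_{n+1}$ (more precisely $f^{-1}(A_n)\subset \overline{f^{-n-1}(U)}\setminus f^{-n+1}(U)$, a union of at most two adjacent annuli). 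The key geometric input is uniform continuity of $f$ and $f^{-1}$ on each compact annulus: there is a modulus so that if two points of $A_n$ are within some $\eta_n$ of each other, their $f^{-1}$-images are within a prescribed amount.

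First I would set up the inductive construction. Pick any point $x\in\R^m\setminus K$, say $x\in A_k$ for a unique $k\geq 1$. The backward orbit $\{f^{-n}(x)\}_{n\geq 0}$ moves through annuli of strictly increasing index in the sense that $f^{-n}(x)\in \overline{f^{-k-n}(U)}$, so it eventually leaves every $A_j$; in fact for $n$ large, $f^{-n}(x)\in A_{j}$ with $j\geq k+n-C$ for a constant, and $\operatorname{dist}(f^{-n}(x),K)\to\infty$ since $K$ attracts bounded sets and $x\notin B$ would contradict $B=\R^m$ — rather, the point is that $f^{-n}(x)$ escapes to infinity, because $\alpha(x)=\emptyset$ by Remark \ref{K_sin_agujeros}. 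Now define positive constants $\epsilon_n$, $n\geq 1$, as follows: let $\epsilon_1$ be any positive number less than, say, $1$; having chosen $\epsilon_1,\dots,\epsilon_{n-1}$, use uniform continuity of $f^{-1}$ on the compact set $\overline{A_{n-1}\cup A_n\cup A_{n+1}}$ to pick $\epsilon_n>0$ so small that $d(a,b)<\epsilon_n$ with $a\in A_n$ forces $d(f^{-1}(a),f^{-1}(b))<\tfrac12\min\{\epsilon_{n-1},\dots,\epsilon_1\}$, and also $\epsilon_n<\tfrac12\epsilon_{n-1}$. Then glue these into a continuous function: choose $\epsilon\in C^+(\R^m)$ with $\epsilon(z)<\epsilon_n$ whenever $z\in A_n$ (this is possible — e.g. take a continuous function dominated by a suitable piecewise-constant majorant using a partition of unity, or simply $\epsilon(z) = \min_n \epsilon_n \cdot(1+ \operatorname{dist}(z, A_n))^{-1}$ type construction tailored so that on $A_n$ it is $<\epsilon_n$); on $K$ put $\epsilon$ equal to any positive value. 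Define $\epsilon$ on all of $\R^m$ this way.

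Next I would verify the defining property. Suppose $x\neq y$ with $x\in\R^m\setminus K$, and suppose for contradiction that $d(f^{-n}(x),f^{-n}(y))\leq \epsilon(f^{-n}(x))$ for all $n\geq 0$. Let $n_j\to\infty$ be a sequence with $f^{-n_j}(x)\in A_{m_j}$, $m_j\to\infty$. By the contraction built into the choice of the $\epsilon_n$, one shows by downward induction that $d(f^{-n}(x),f^{-n}(y))$ is forced to be extremely small for all $n\leq n_j$: tracking one step of $f^{-1}$ at a time from index $n_j$ back toward $0$, each application uses the uniform-continuity estimate at the annulus currently containing $f^{-n}(x)$ and halves the relevant $\epsilon$; after enough steps the bound at $n=0$ becomes smaller than $d(x,y)>0$, a contradiction once $m_j$ (hence $n_j$) is large enough. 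Hence there must be some $n_0$ with $d(f^{-n_0}(x),f^{-n_0}(y))>\epsilon(f^{-n_0}(x))$, which is the claim.

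The main obstacle — and the point requiring genuine care — is making the backward-contraction bookkeeping actually close. The subtlety is that under $f^{-1}$ a point in $A_n$ may land in $A_{n-1}$, $A_n$, or $A_{n+1}$, so the annulus index of the backward orbit of $x$ is not simply $n\mapsto (\text{start})+n$; I must be sure it is nondecreasing-to-infinity and that the inductive choice of $\epsilon_n$ controls the one-step transition out of $A_n$ into \emph{any} of the at most two admissible neighbors, with a uniform gain (a factor $\tfrac12$) at each step, so that after finitely many steps the accumulated smallness beats the fixed number $d(x,y)$. Equivalently, one wants: along the backward orbit from a far-out annulus, the distance $d(f^{-n}(x),f^{-n}(y))$ decreases geometrically as $n$ decreases, which follows because each $f^{-1}$-step is taken on a compact set where $f^{-1}$ is uniformly continuous with a modulus we pre-committed to beat. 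Packaging this cleanly — choosing the sequence $\epsilon_n$ and the continuous $\epsilon$ so the estimate telescopes — is the crux; everything else is routine compactness and the fact, from Remark \ref{K_sin_agujeros}, that backward orbits of points off $K$ go to infinity.
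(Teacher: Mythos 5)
Your overall strategy is the same as the paper's (inductively chosen constants $\epsilon_n$ on the dynamical annuli $A_n$, obtained from compactness and uniform continuity, then glued into a continuous $\epsilon$ with $\epsilon|_{A_n}<\epsilon_n$), but the key estimate in your construction is oriented the wrong way, and this is exactly the point you yourself flag as the crux. You choose $\epsilon_n$ via uniform continuity of $f^{-1}$, so that $d(a,b)<\epsilon_n$ with $a\in A_n$ forces $d(f^{-1}(a),f^{-1}(b))<\tfrac12\min\{\epsilon_1,\dots,\epsilon_{n-1}\}$. That propagates smallness \emph{outward}, from $A_n$ one more step into the past (note $f^{-1}(A_n)=A_{n+1}$ exactly, since the annuli are dynamically defined, so your worry about the index wandering is unfounded). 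But in your verification the downward induction ``from index $n_j$ back toward $0$'' applies $f$ at each step, not $f^{-1}$, and for that you have no estimate: from the hypothesis $d(f^{-n}(x),f^{-n}(y))<\epsilon(f^{-n}(x))$ for all $n$ your constraints only yield bounds on still earlier iterates, never a bound on $d(x,y)$, so no contradiction with $d(x,y)>0$ is reached. The gap is not cosmetic: the modulus of continuity of $f^{-1}$ controls how fast backward orbits can \emph{separate}, whereas what must be excluded is distinct backward orbits \emph{approaching} each other faster than $\epsilon_n$ decays, and that is governed by the modulus of continuity of $f$. For instance, let $K=\{0\}\subset\R^2$ and let $f$ halve the radius while acting on the angle by the inverse of a north--south circle map whose contraction to its attracting fixed point is super-exponential (e.g.\ $\theta\mapsto\theta^2$ near the fixed point). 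Then $f^{-1}$ has uniformly bounded distortion on the annuli, so $\epsilon_n$ decaying only geometrically satisfies all your constraints, yet there are pairs $x\neq y$ whose backward orbits approach each other doubly exponentially and hence are $\epsilon$-close at every backward time; the constructed $\epsilon$ fails the lemma for such a choice.

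The repair is the choice the paper actually makes: using finite covers of $A_n$ and uniform continuity of $f$ (not $f^{-1}$), pick $\epsilon_{n+1}$ so small that for $a\in A_{n+1}$ one has $f^i(B(a,\epsilon_{n+1}))\subset B(f^i(a),\epsilon_{n+1-i}/2^i)$ for all $i\leq n+1$; in particular $\epsilon_{n+1}$-closeness on $A_{n+1}$ forces $\tfrac12\epsilon_n$-closeness of the $f$-images. Then if $x\in A_{m_0}$ and the backward orbits of $x\neq y$ were $\epsilon$-close for all times, one would get $d(x,y)\leq\epsilon_{m_0}/2^n$ for every $n$, forcing $x=y$. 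With this single change of direction your argument closes and coincides with the paper's; the remaining ingredients you mention (gluing a continuous $\epsilon$ below the $\epsilon_n$, backward orbits escaping to infinity) are correct and routine.
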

	
	\begin{proof}
		Let $U$ be the neighborhood of $K$ given by the definition of global attractor, and define  
		\[
		A_n=\overline{f^{-n}(U)\setminus f^{-(n-1)}(U)},\quad n\in\N.
		\]
		Each $A_n$ is compact.

		Fix $\epsilon_0>0$ and   $x\in A_0$.  
		Choose $\epsilon_1^x>0$ for $f^{-1}(x)\in A_1$ such that  
		\[
		f\big(B(f^{-1}(x), \epsilon_1^x)\big)\subset B(x,\tfrac{\epsilon_0}{2}).
		\] 
		
		Set  
		\[
		\epsilon_1=\min\{\epsilon_1^{x}>0: x\in A_0\}.
		\]  
		Thus for every $x\in A_0$,  
		\[
		f\big(B(f^{-1}(x), \epsilon_1)\big)\subset B(x,\tfrac{\epsilon_0}{2}).
		\]  
		This defines $\epsilon_1$ for every point in $A_1$.

		Fix some $n\geq 1$ and assume that $\epsilon_i>0$ has been defined for $i\leq n$ so that for every $x\in A_n$,  
		\[
		f^i\big(B(x,\epsilon_n)\big)\subset B\!\left(f^i(x), \tfrac{\epsilon_{n-i}}{2^i}\right),\quad 0\leq i\leq n.
		\]  
		Let $x\in A_n$,  
		choose $\epsilon_{n+1}^x>0$ such that for all $i\leq n+1$,  
		\[
		f^i\big(B(f^{-1}(x),\epsilon_{n+1}^x)\big)\subset B\!\left(f^{i-1}(x), \tfrac{\epsilon_{n+1-i}}{2^i}\right).
		\]  
		Compactness of $A_n$ yields   
		\[
		\epsilon_{n+1}=\min\{\epsilon_{n+1}^{x}: x\in A_n\}>0.
		\]

		Take $x\in A_0$ and $y\in B(x,\epsilon_0)$.  
		By the previous construction there exists $n_0$ such that  
		\[
		y\notin B\!\left(x,\tfrac{\epsilon_0}{2^{n_0}}\right),
		\]
		which implies  
		\[
		d(f^{-n_0}(x), f^{-n_0}(y))>\epsilon_{n_0}.
		\]

		Define $\epsilon:\R^m\to \R^+$ by  
		\[
		\epsilon|_U \equiv \epsilon_0, 
		\qquad \epsilon(x)<\epsilon_i \ \text{for } x\in A_i,\ i\geq 1.
		\]
		This function satisfies the desired property and the result follows.
	\end{proof}

	
	The previous lemma ensures separation of backward orbits. Combined with shadowing, this implies uniqueness of  tracing orbit outside the attractor.
	
	\begin{clly}\label{cor:epsilon_expansivo}
		Let $f:\R^m\to \R^m$ be a homeomorphism with the topological shadowing property, and let $K\subset \R^m$ be a global attractor.  
		There exists $\epsilon\in C^+(\R^m)$ such that, for the corresponding $\delta\in C^+(\R^m)$ from the shadowing property, every $\delta$-pseudo orbit that coincides with the past orbit of some $x\in \R^m\setminus K$ is $\epsilon$-shadowed uniquely by the orbit of $x$.
	\end{clly}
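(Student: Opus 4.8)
The plan is to run, for the map in backward time, the same argument that yields the ``in particular'' clause of Lemma \ref{epsilon}: the \emph{uniform} function $\epsilon$ constructed in Lemma \ref{epsilon_expansivo_gral} should be exactly what is needed. So first I would take $\epsilon\in C^+(\R^m)$ to be the function produced by Lemma \ref{epsilon_expansivo_gral}, and let $\delta\in C^+(\R^m)$ be the function that the topological shadowing property of $f$ associates to this $\epsilon$, so that every $\delta$-pseudo orbit is $\epsilon$-shadowed by an orbit.

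Next, given a $\delta$-pseudo orbit $\{x_n\}_{n\in\Z}$ that coincides in the past with a genuine orbit of a point $x\in\R^m\setminus K$ — say $x_n=f^n(x)$ for all $n\le n_0$ — the shadowing property provides a point $y$ whose orbit $\epsilon$-shadows $\{x_n\}$, i.e. $d(f^n(y),x_n)<\epsilon(x_n)$ for every $n$. The whole corollary reduces to proving $y=x$. To do this I would argue by contradiction: if $y\ne x$, Lemma \ref{epsilon_expansivo_gral} applied to the pair $(x,y)$ yields an index $k$ with $d(f^{-k}(x),f^{-k}(y))>\epsilon(f^{-k}(x))$; since the pseudo orbit is already the true orbit of $x$ at the relevant backward time, this contradicts $\epsilon$-shadowing — provided the separation occurs at a time $\le n_0$.

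The only real point to watch is this index matching: the separation time handed out by Lemma \ref{epsilon_expansivo_gral} is a nonnegative $k$ attached to the pair, hence a negative separation time $-k$, while $n_0$ could a priori be positive or negative. When $n_0\ge 0$ there is nothing to do, since $-k\le 0\le n_0$. In general I would apply Lemma \ref{epsilon_expansivo_gral} instead to the pair $(f^{N}(x),f^{N}(y))$ with $N=\min\{n_0,0\}$ — this is legitimate because $K$ is invariant, so $f^N(x)\in\R^m\setminus K$, and $f$ is injective, so $f^N(y)\ne f^N(x)$ — obtaining a separation at time $N-k\le N\le n_0$, where $x_{N-k}=f^{N-k}(x)$, and hence the desired contradiction with $d(f^{N-k}(y),x_{N-k})<\epsilon(x_{N-k})$. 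Therefore $y=x$, which is precisely the assertion that the $\delta$-pseudo orbit is $\epsilon$-shadowed by the orbit of $x$. I do not expect any further obstacle: once $\epsilon$ is chosen from Lemma \ref{epsilon_expansivo_gral}, everything is a routine transcription of the argument behind the ``in particular'' part of Lemma \ref{epsilon}, the only subtlety being the bookkeeping of time indices handled by the shift $N$.
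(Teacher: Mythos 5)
Your proposal is correct and follows essentially the same route as the paper's own proof: take $\epsilon$ from Lemma \ref{epsilon_expansivo_gral}, the corresponding $\delta$ from the shadowing property, and conclude that the shadowing orbit must be the orbit of $x$ itself, since $y\neq x$ would force a backward separation exceeding $\epsilon$ at a time where the pseudo orbit is the true orbit of $x$. Your time-shift by $N=\min\{n_0,0\}$ is a harmless refinement that merely makes explicit an index-bookkeeping detail the paper leaves implicit.
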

	
	\begin{proof}
		Take $\epsilon\in C^+(\R^m)$ from Lemma \ref{epsilon_expansivo_gral}, and let $\delta\in C^+(\R^m)$ of the topological shadowing property corresponding to $\epsilon$.  
		Consider a $\delta$-pseudo orbit $\{x_n\}_{n\in \Z}$ with $x_n=f^n(x)$ for $n\leq0$ and $x\in \R^m\setminus K$.  
		By Lemma \ref{epsilon_expansivo_gral}, if $y$ satisfies  
		\[
		d(f^n(y),x_n)<\epsilon(x_n),\quad \forall n\leq0,
		\]  
		then $y=x$.  
		Hence the orbit of $x$ is the unique orbit that $\epsilon$-shadows $\{x_n\}_{n\in \Z}$.
	\end{proof}

	
	We can now establish the key stability property for the boundary points of the global attractor.
	
	\begin{prop}\label{Lyap_est}
		Let $f:\R^m\to \R^m$ be a homeomorphism with the topological shadowing property, and let $K\subset \R^m$ be a global attractor.  
		Then every point of $\partial K$ is Lyapunov stable.
	\end{prop}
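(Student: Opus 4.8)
The plan is to argue directly, without contradiction, by comparing the forward orbit of a point $x$ near $z$ and the forward orbit of $z$ through the forward orbit of a single auxiliary point $y\in\R^m\setminus K$, whose entire future is pinned down from its past by Corollary \ref{cor:epsilon_expansivo}. Fix $z\in\partial K$ and $\epsilon>0$, and set $\epsilon_0=\epsilon/2$. The function $\epsilon\in C^+(\R^m)$ of Lemma \ref{epsilon_expansivo_gral} may be taken, and by the very construction in its proof is taken, so as to equal the constant $\epsilon_0$ on the trapping neighborhood $U$ from the definition of stable global attractor; fix such an $\epsilon$, and let $\delta\in C^+(\R^m)$ be a function given for it by the topological shadowing property, so that Corollary \ref{cor:epsilon_expansivo} applies to the pair $\epsilon,\delta$.

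Since $z\in\partial K$ and $K$ is compact, $\R^m\setminus K$ accumulates at $z$. Using the continuity of $f$ together with the continuity and positivity of $\delta$, I would fix a point $y\in\R^m\setminus K$ so close to $z$ that $d(y,z)<\epsilon_0$ and $d(f(y),f(z))<\tfrac12\delta(f(y))$, and write $c:=\delta(f(y))>0$. Then, using that $U$ is an open neighborhood of $z$ and that $f$ is continuous at $z$, I would choose $\rho\in(0,\epsilon_0)$ with $B(z,\rho)\subset U$ and such that $d(x,z)<\rho$ implies $d(f(x),f(z))<c/2$. The claim is that this $\rho$ serves as the $\delta$ in the definition of Lyapunov stability of $z$ for the given $\epsilon$.

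Indeed, let $d(x,z)<\rho$; the case $n=0$ is clear since $\rho<\epsilon_0<\epsilon$. For $n\ge1$, consider the sequences $p=(p_n)_{n\in\Z}$ and $q=(q_n)_{n\in\Z}$ defined by $p_n=q_n=f^n(y)$ for $n\le0$, and $p_n=f^n(z)$, $q_n=f^n(x)$ for $n\ge1$. Both are $\delta$-pseudo orbits: every consecutive pair is exact except the step from $0$ to $1$, where $d(f(y),f(z))<\tfrac12 c<c=\delta(f(p_0))$ and $d(f(y),f(x))\le d(f(y),f(z))+d(f(z),f(x))<\tfrac12 c+\tfrac12 c=\delta(f(q_0))$. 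Each of $p,q$ coincides with the orbit of $y\in\R^m\setminus K$ in the past, so by Corollary \ref{cor:epsilon_expansivo} the orbit of $y$ itself $\epsilon$-shadows it; hence $d(f^n(y),f^n(z))<\epsilon(f^n(z))$ and $d(f^n(y),f^n(x))<\epsilon(f^n(x))$ for every $n\ge1$. As $x\in B(z,\rho)\subset U$ and $\overline{f(U)}\subset U$, both $f^n(z)$ and $f^n(x)$ lie in $U$ for $n\ge1$, where $\epsilon$ equals the constant $\epsilon_0$; the triangle inequality then gives $d(f^n(x),f^n(z))<2\epsilon_0=\epsilon$ for all $n\ge1$, and $z$ is Lyapunov stable.

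The heart of the matter is conceptual rather than computational: neither the orbit of $x$ nor that of $z$ ``comes from infinity outside $K$'', so neither is controlled by the shadowing machinery on its own, and the role of the uniform ``expansive from infinity'' function of Lemma \ref{epsilon_expansivo_gral} is precisely to let us interpose the single orbit of $y$ and bound both comparisons by the small constant $\epsilon_0$. The one point deserving care is the observation that the function $\epsilon$ of Lemma \ref{epsilon_expansivo_gral} may be arranged to be an arbitrarily small constant on $U$ — as its proof does — since everything downstream of that estimate is routine bookkeeping with the triangle inequality and the continuity of $f$ and $\delta$.
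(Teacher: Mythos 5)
Your proof is correct and follows essentially the same route as the paper's: both interpose an auxiliary point of $\R^m\setminus K$ near the boundary, build two $\delta$-pseudo orbits sharing that point's past orbit (one continuing along the orbit of $z$, the other along the orbit of the perturbed point), invoke Corollary \ref{cor:epsilon_expansivo} so that the single exterior orbit $\tilde\epsilon$-shadows both, and finish with the triangle inequality using $\tilde\epsilon\equiv\epsilon/2$ on a neighborhood of $K$. The only differences are cosmetic: you place the pseudo-orbit jump at time $0\to1$ from $f(y)$ and extract the Lyapunov modulus $\rho$ from continuity of $f$ at $z$, whereas the paper jumps at $f^{-1}(z)$ and takes $\delta=\tilde\delta(z)$.
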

	
	\begin{proof}
	Let $z\in \partial K$ and  $U$ of the definition of attractor for  $K$.
	Given $\epsilon>0$, consider  $\tilde{\epsilon}\in C^+(\R^m)$ from  Lemma \ref{epsilon_expansivo_gral} such that $\tilde{\epsilon}(x)=\epsilon/2$ for every $x\in U$. Take $\tilde{\delta}\in C^+(\R^m)$ of the shadowing property corresponding to  $\tilde{\epsilon}$.
	Now consider $x\in U\setminus K$ close enough to $z$ satisfying $z\in B(x, \tilde \delta(x))$. 
	Note that there exists $\delta>0$ such that $B(z, \delta)\subset B(x,\tilde{\delta}(x))$. 
	
	Take the following $\tilde{\delta}$-pseudo orbits: 	$\{x_n\}_{n\in\Z}$ such that $$x_n=\begin{cases} f^n(x) &n<  0\\ f^{n}(z) & n\geq 0 \end{cases},$$  and $\{y_n\}_{n\in \Z}$ such that  $$y_n=\begin{cases} f^n(x) & n<0\\  f^{n}(y) &n\geq0\end{cases}$$ for  $y\in B(z,\delta)$. 
	This two $\tilde{\delta}$-pseudo orbits are $\tilde{\epsilon}$-shadowed by the orbit of $x$ due to Corollary  \ref{cor:epsilon_expansivo}. Then 
	\begin{eqnarray*} d(f^{n}(z), f^{n}(y))&\leq &d(f^n(z), f^{n}(x))+d(f^{n}(x), f^{n}(y))\\&\leq&\tilde{\epsilon}(f^n(z))+\tilde{\epsilon}(f^n(y))\\&<&\epsilon/2+\epsilon/2\\ &=&\epsilon\end{eqnarray*}  for every $n>0$.  Then  the result follows.
	\end{proof}
	
	The previous proposition  shows that the shadowing property enforces strong stability restrictions on the boundary of a global attractor.  
	In particular it establishes that every boundary point is Lyapunov stable, a phenomenon that rules out the kind of complex dynamics often observed on attractor boundaries in more general systems.
	
	This rigidity has a decisive consequence: once the attractor’s boundary is entirely Lyapunov stable, the attractor itself must collapse to a trivial form.  
	The next section is devoted to proving this statement precisely, culminating in Theorem \ref{K_trivial}. 
	
	\section{Triviality of the global attractor}\label{main_thm}
	
	We are now ready to prove the main theorem of this paper.  
	The key input is the rigidity obtained in the previous section: every boundary point of a global attractor for a homeomorphism with the topological shadowing property is Lyapunov stable (Proposition \ref{Lyap_est}).  
	This strong form of stability severely restricts the possible geometry of the attractor, ultimately forcing it to be trivial.
	
	The following is a nice result that relates the stability of points of a compact invariant set with the existence of a metric such that the map turns out to be a weak contraction.

	\begin{lemma} \cite[Theorem 2.1]{AAB96}\label{contr_debil}
		Let $(X,d)$ be a compact metric space and  $f:X\to X$ a continuous map. 
		Then the following statements are equivalent.
		\begin{enumerate}
			\item Every $x\in X$ is Lyapunov stable.
			\item The metric $d_f(x,y)=\sup_{n\in\N} d(f^n(x), f^n(y))$ is equivalent to   $d$ and $f$ is a weak  contraction respect to  $d_f$.
		\end{enumerate}
	\end{lemma}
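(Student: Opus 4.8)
The plan is to reduce both implications to one observation: the quantity $d_f(x,y)=\sup_{n\in\N}d(f^n(x),f^n(y))$ is always a genuine metric dominating $d$, and $f$ is \emph{automatically} a weak contraction for it, so that the entire content of statement (2) is the topological equivalence $d_f\sim d$ — and that equivalence is nothing but a restatement of simultaneous Lyapunov stability.

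First I would collect the elementary properties of $d_f$. Compactness of $X$ makes $d$ bounded, so the supremum is finite; symmetry and the triangle inequality pass from $d$ to $d_f$; and evaluating at $n=0$ shows both that $d_f(x,y)=0$ forces $x=y$ (so $d_f$ is a metric) and that $d\le d_f$ pointwise (so the $d_f$-topology is a priori finer than the $d$-topology). Finally, $d_f(f(x),f(y))=\sup_{n\ge1}d(f^n(x),f^n(y))\le d_f(x,y)$, so $f$ is a weak contraction with respect to $d_f$ unconditionally; this clause of (2) therefore need not be addressed in either direction.

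It remains to match ``$d_f\sim d$'' with ``(1)''. For (1)$\Rightarrow$(2): since $d\le d_f$ always, equivalence amounts to continuity of $\mathrm{id}\colon(X,d)\to(X,d_f)$; fixing $x_0$ and $\epsilon>0$, Lyapunov stability at $x_0$ yields $\delta>0$ with $d(x_0,y)<\delta\Rightarrow d(f^n(x_0),f^n(y))<\epsilon$ for all $n\ge0$, i.e. $d_f(x_0,y)\le\epsilon$, which is exactly that continuity. Conversely, for (2)$\Rightarrow$(1): fix $x_0$ and $\epsilon>0$; using $d\le d_f$ together with the weak-contraction property, for every $y$ and $n$ one has $d(f^n(x_0),f^n(y))\le d_f(f^n(x_0),f^n(y))\le d_f(x_0,y)$, so it suffices to bound $d_f(x_0,y)$, and continuity of $\mathrm{id}\colon(X,d)\to(X,d_f)$ (part of the equivalence) gives $\delta>0$ with $d(x_0,y)<\delta\Rightarrow d_f(x_0,y)<\epsilon$; hence $x_0$ is Lyapunov stable. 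The only step needing any care is the finiteness of $d_f$, which is where compactness enters, plus keeping straight that the inclusion $d\le d_f$ is automatic while the reverse comparison of topologies is precisely where stability is used; I do not expect a genuine obstacle beyond this bookkeeping.
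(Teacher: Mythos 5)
Your argument is correct and complete; the paper gives no proof of this lemma (it is quoted from \cite{AAB96}), and your route is exactly the standard one behind that reference: $d_f$ is automatically a metric dominating $d$ with $f$ nonexpansive for it, so the whole content of (2) is the topological equivalence $d_f\sim d$, which in one direction is pointwise Lyapunov stability verbatim and in the other follows from $d(f^n(x),f^n(y))\le d_f(f^n(x),f^n(y))\le d_f(x,y)$ together with continuity of the identity $(X,d)\to(X,d_f)$ at the given point. The only point worth flagging is the convention that the supremum includes $n=0$ (otherwise definiteness of $d_f$ and the inequality $d\le d_f$ can fail, e.g.\ for a constant map); this is the reading you adopt and the one under which the statement is true.
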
	
	
	
	


	The next results intends to show the interaction between the dynamics of points with empty $\alpha$-limit and the dynamics of points of the attractor's boundary.
	First we will show that two different point of $\partial K$ can be accessible by pseudo orbits  that share the same orbit  for backwards iterates.
	Afterwards, from this fact we will show  that every pair of points in $\partial K$ are asymptotic. 

	\begin{lemma}\label{po_conecta}
		Let   $f:\R^m\to \R^m$ ($m\geq2$) be a  homeomorphism with the topological shadowing property and   $K\subset \R^m$ be a  global attractor. 
		For every  $y, z\in \partial K$, $x\neq y$ and $\delta>0$,   there exists a $\delta$-pseudo orbit coming from inifinity passing through $y$ and $f^n(z)$ for some $n\in \N$.
	\end{lemma}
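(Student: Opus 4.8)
The goal is to build a $\delta$-pseudo orbit that comes from infinity, passes through $a$, and later passes through some forward iterate $f^n(b)$. The natural strategy is to concatenate three pieces: a genuine backward orbit of a point coming from infinity that is $\tilde\delta$-close to $f^{-1}(a)$ (this exists by Lemma \ref{epsilon_expansivo_gral}, exactly as in the proof of Proposition \ref{Lyap_est}); then a ``jump'' from that approximating point into $a$ itself; then, from $a$, I want to travel inside a small neighborhood until I can jump to $b$ (or to some $f^n(b)$) using only $\delta$-sized steps.

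The first two pieces are already essentially done in Proposition \ref{Lyap_est}: choose $p$ and $r_0$ with $U \subsetneq B(p,r_0)$, invoke Lemma \ref{epsilon_expansivo_gral} to pick $x \in \R^m \setminus K$ with $d(f^{-1}(a), x) < \delta(x)$ for a suitable continuous gauge, and set $x_n = f^n(x)$ for $n < 0$, $x_0 = a$. So $\{x_n\}_{n<0} \cup \{x_0 = a\}$ is a $\delta$-pseudo orbit coming from infinity that passes through $a$. The real work is the third piece: I must connect $a$ to $b$ (up to a forward iterate) by a finite $\delta$-pseudo orbit segment. Here is where I would use the topology established earlier. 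By Lemma \ref{partial_K_conexo}, $\partial K$ is connected and compact; since $\partial K$ is a continuum, it is in particular a connected compact metric space, hence a chain: for any prescribed $\eta > 0$ there is a finite sequence of points $a = p_0, p_1, \dots, p_k = b$ in $\partial K$ with $d(p_{i}, p_{i+1}) < \eta$. I would then pick $\eta$ small relative to $\delta$ and to the modulus of continuity of $f$ on a compact neighborhood of $\partial K$, and stitch these together: from $a$ jump to a point near $p_1$, apply $f$, jump to a point near $p_2$, and so on. The subtlety is that after applying $f$ the points may drift off $\partial K$ (though they stay near $K$, since $\overline{f(U)} \subset U$), so I need to track them in a fixed compact neighborhood where $f$ is uniformly continuous, and choose the chain fineness $\eta$ and the number of intermediate jumps so the accumulated error at each single step stays below $\delta$ evaluated at the relevant point; since $\delta \in C^+(\R^m)$ is bounded below by a positive constant on any compact set, this is feasible.

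Alternatively — and perhaps more cleanly — rather than chaining along $\partial K$ I would use that $a$ and $f^{-1}(a)$ both lie in $\partial K$, together with stability from Proposition \ref{Lyap_est}, to note that forward iterates contract distances in the metric $d_f$ of Lemma \ref{contr_debil}; so once I am near $a$, applying $f$ repeatedly keeps me uniformly near the forward orbit of $a$, and I can afford to make one controlled $\delta/2$-jump over to near $b$'s orbit at a late enough time $n$ where the $d_f$-ball of radius comparable to $\delta$ around $f^n(a)$ and $f^n(b)$ overlap — but this would require $a$ and $b$ to be asymptotic, which is precisely what the paper proves \emph{after} this lemma, so I must avoid circularity and stick with the chain-in-$\partial K$ argument.

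\textbf{Main obstacle.} The delicate point is the third segment: controlling the $\delta$-pseudo orbit as it walks from $a$ to $f^n(b)$. The error budget $\delta(\cdot)$ is a continuous positive function, not a constant, but all the relevant points lie in the compact set $\overline{U}$ (since everything past the first jump is trapped near $K$), so $\delta$ has a positive lower bound $\delta_0$ there and $f$ has a uniform modulus of continuity. The combinatorial bookkeeping — choosing the chain fineness $\eta$, and ensuring that inserting $f$-iterates between consecutive chain points does not blow the step size past $\delta_0$ — is the technical heart, but it is routine once the compactness and the connectedness of $\partial K$ (Lemma \ref{partial_K_conexo}) are in hand. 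The value $n$ for which we land on $f^n(b)$ will simply be the number of $f$-applications interleaved in the chain traversal.
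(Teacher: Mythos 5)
Your skeleton --- a backward orbit coming from infinity glued on at $a$, followed by a finite chain inside the connected compact set $\partial K$ from $a$ to $b$ with iterates of $f$ interleaved --- is exactly the paper's construction. But the step you label ``routine combinatorial bookkeeping'' is precisely where the argument has a gap, and the ingredient you set aside (the adapted metric of Lemma \ref{contr_debil}) is what closes it. In the interleaved pseudo orbit the $i$-th point is $f^i(x_i)$, so the $i$-th single-step error is $d(f^{i+1}(x_i),f^{i+1}(x_{i+1}))$: you must control the $(i{+}1)$-st iterate of $f$ applied to $\eta$-close chain points, not $f$ itself. Uniform continuity of $f$ on $\overline{U}$ gives nothing uniform in $i$, and the obvious fix is circular: to keep every error below $\delta_0$ you need the chain fineness $\eta$ to beat the moduli of continuity of $f,\dots,f^{n}$, but the length $n$ of an $\eta$-chain grows as $\eta$ shrinks, which in turn forces a smaller $\eta$. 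Without equicontinuity of the family $\{f^n\}_{n\geq 0}$ on $\partial K$ this does not terminate, so connectedness and compactness of $\partial K$ alone do not suffice. (Also, your worry that iterates ``drift off $\partial K$'' is moot: $K$ is invariant and $f$ is a homeomorphism, so $\partial K$ is invariant.)

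The paper resolves exactly this point via Proposition \ref{Lyap_est} together with Lemma \ref{contr_debil}: since every point of $\partial K$ is Lyapunov stable, there is a metric $d_f\geq d$, equivalent to $d$ on $\partial K$, in which $f|_{\partial K}$ is a weak contraction. Taking $\delta_0=\min\{\delta(x):x\in\partial K\}$, a $\delta_0$-chain $a=x_0,\dots,x_n=b$ in $\partial K$ with respect to $d_f$, and the pseudo orbit $\tilde{x}_i=f^i(x_i)$ for $0\leq i<n$ (continued by $f^i(b)$ for $i\geq n$ and by the backward orbit from infinity for $i<0$), one gets
$$d(f(\tilde{x}_i),\tilde{x}_{i+1})\leq d_f(f^{i+1}(x_i),f^{i+1}(x_{i+1}))\leq d_f(x_i,x_{i+1})<\delta_0,$$
uniformly in $i$, with no modulus-of-continuity bookkeeping at all. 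Note that this uses only the non-expansiveness of $f$ in $d_f$, not any asymptoticity of $a$ and $b$; your circularity worry is valid for your ``late-time single jump'' alternative (which would need Lemma \ref{asymptotic_points}), but it does not apply to this use of $d_f$. So the fix is to reinstate the stability/adapted-metric ingredient rather than to rely on uniform continuity of $f$ alone.
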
	
	
	\begin{proof}	
		Since $\partial K$ is a compact invariant set, by Lemma \ref{contr_debil} there exists a metric $d_f$ equivalent to $d$ such that $f|_{\partial K}$ is a weak contraction.  Take $\delta\in C^+(\R^m)$ and set $\delta_0=\min\{\delta(x): x\in \partial K\}$. Let $y, z\in \partial K$, $y\neq z$,  by Lemma \ref{partial_K_conexo} $\partial K$ is a connected set, then chain connected. Consider a $\delta_0$-chain $\{y=x_0, x_1,\ldots, x_n=z\}\subset \partial K$. 
		
		Take $\{\tilde{x}_i\}_{i\in \Z}$ such that $$\tilde{x}_i=\begin{cases} f^i(x)&\mbox{if } i<0\\f^i(x_i)&\mbox{if } 0\leq i<n\\ f^i(z)& \mbox{if } i\geq n\end{cases}.$$ Let us see that 
		$\{\tilde{x}_i\}_{i\in \Z}$ is a $\delta_0$-pseudo orbit with respect  to the metric $d$. For $i<0$ and $i\geq n$ this is immediate. For $0\leq i<n$ we have 	$$d(f(\tilde{x}_i), \tilde{x}_{i+1})\leq d_f(f(\tilde{x}_i), \tilde{x}_{i+1})=d_f(f^{i+1}(x_i), f^{i+1}(x_{i+1}))\leq d_f(x_i, x_{i+1})<\delta_0.$$ Then the result follows.		
	\end{proof}

	\begin{lemma}\label{asymptotic_points}
		Let $f:\R^m\to \R^m$ ($m\geq 2$) be a homeomorphism with the topological shadowing property and $K\subset \R^m$ be a  global attractor. 
		Then every pair of points in $\partial K$ are asymptotic, i.e. given $y, z\in \partial K$ and $\epsilon>0$, there exists $n_0\in\N$ such that $d(f^k(y), f^k(z))<\epsilon$ for every $k\geq n_0$.
	\end{lemma}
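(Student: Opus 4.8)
The plan is to combine Lemma~\ref{po_conecta} with Corollary~\ref{cor:epsilon_expansivo} and the weak-contraction metric from Lemma~\ref{contr_debil}. Fix $a,b\in\partial K$ and $\epsilon>0$. Using Lemma~\ref{contr_debil} pass to the equivalent metric $d_f$ on $\partial K$ with respect to which $f|_{\partial K}$ is a weak contraction; it suffices to prove the asymptotic conclusion for $d_f$ since $d$ and $d_f$ induce the same topology on the compact set $K$ (so a uniform $d_f$-estimate translates into a uniform $d$-estimate once the orbits stay in a fixed neighborhood of $K$, which they eventually do because $K$ attracts bounded sets).

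First I would produce, for a suitably small $\delta$, a $\delta$-pseudo orbit $\{\tilde x_i\}_{i\in\Z}$ as in Lemma~\ref{po_conecta}: it coincides with the genuine orbit of some point $x\in\R^m\setminus K$ for all $i<0$, runs through a $\delta_0$-chain in $\partial K$ from $a$ to $b$ on the finitely many indices $0\le i<n$, and equals $f^{i}(b)$ for $i\ge n$. By Corollary~\ref{cor:epsilon_expansivo}, applied with the function $\tilde\epsilon\in C^+(\R^m)$ furnished by Lemma~\ref{epsilon_expansivo_gral} chosen so that $\tilde\epsilon<\epsilon/4$ on a ball containing the relevant region near $K$, this pseudo orbit is forward $\tilde\epsilon$-shadowed by the orbit of $x$ itself. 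In particular $d(f^i(x),f^i(b))<\tilde\epsilon(f^i(x))<\epsilon/4$ for all $i\ge n$, and also $d(f^i(x),\tilde x_i)<\tilde\epsilon$ for $0\le i<n$; since $\tilde x_0=a$ this gives $d(f^0(x),a)<\epsilon/4$, i.e. $x$ itself is $\epsilon/4$-close to $a$.

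The point is then that the single orbit of $x$ is eventually $\epsilon/2$-close both to the forward orbit of $a$ and to the forward orbit of $b$. To get the former: since $x$ is $\epsilon/4$-close to $a\in\partial K$ and $f|_{\partial K}$ is a weak $d_f$-contraction while $K$ is Lyapunov stable by Proposition~\ref{Lyap_est}, one gets $d(f^i(x),f^i(a))<\epsilon/4$ for all $i\ge0$ (shrinking the initial $\delta$ further so that the stability modulus at $a$ for tolerance $\epsilon/4$ is respected, exactly as in the proof of Proposition~\ref{Lyap_est}: build the second pseudo orbit that branches to the orbit of $a$ at index $0$ instead of continuing to $b$, and shadow it by the same orbit of $x$). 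Combining,
\[
d(f^k(a),f^k(b))\le d(f^k(a),f^k(x))+d(f^k(x),f^k(b))<\epsilon/4+\epsilon/4<\epsilon
\]
for all $k\ge n$, which is the desired conclusion with $n_0=n$.

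The main obstacle I anticipate is bookkeeping the metrics and the quantifiers carefully: Corollary~\ref{cor:epsilon_expansivo} is phrased for the fixed function $\epsilon$ of Lemma~\ref{epsilon_expansivo_gral}, so one cannot shrink $\tilde\epsilon$ freely for each $\epsilon$ unless Lemma~\ref{epsilon_expansivo_gral} is invoked afresh with the right choice — this is fine since that lemma lets us prescribe $\tilde\epsilon$ to be as small a constant as we like on any fixed ball around $K$. The other delicate point is making sure the ``branch to $a$'' and ``branch to $b$'' pseudo orbits genuinely share the same backward orbit of $x$ and are both $\delta$-pseudo orbits for one common $\delta$; this forces choosing $\delta$ as the minimum of the shadowing constant, the weak-contraction-chain constant $\delta_0$ on $\partial K$, and the stability modulus at $a$, and then selecting $x$ close enough to $f^{-1}(a)$ (equivalently $f^{-1}(z)$ in the notation of Proposition~\ref{Lyap_est}) — all of which is available by the same arguments already used above. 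Once these constants are lined up, the estimate is the routine triangle inequality displayed above.
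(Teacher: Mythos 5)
Your proposal follows essentially the same route as the paper: take the chain pseudo-orbit of Lemma~\ref{po_conecta} from $a$ to $b$ and a second pseudo-orbit sharing the same backward orbit of $x$ but branching into the orbit of $a$, conclude via Corollary~\ref{cor:epsilon_expansivo} (with $\tilde\epsilon$ from Lemma~\ref{epsilon_expansivo_gral} made small near $K$) that both are shadowed by the single orbit of $x$, and finish with the triangle inequality. The argument is correct, with only cosmetic differences (your $\epsilon/4$ versus the paper's $\epsilon/2$, and your slightly more explicit bookkeeping of the constants $\delta_0$, the shadowing modulus, and the stability modulus at $a$).
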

	\begin{proof}
		Take  $d_f$ of Lemma \ref{contr_debil} for $\partial K$.  Given $y, z\in\partial K$ and $\epsilon>0$.
		Take $\tilde{\epsilon}\in C^+(\R^m)$ of Corollary \ref{cor:epsilon_expansivo} such that $\tilde{\epsilon}(x)=\epsilon/2$ for every $x\in U$, where $U$ is the bounded and open   neighborhood of the definition of global  attractor for $K$.
		Let $\tilde{\delta}\in C^+(\R^m)$  of the topological shadowing  corresponding to $\tilde{\epsilon}$.
		Choose $0<\delta<\min\{\delta_0\}\bigcup \{\tilde{\delta}(x):x\in U\}$, where $\delta_0>0$ is found by Lemma \ref{po_conecta}.
		
		For this specific $\delta$, consider the $\delta$-chain 	 $\{y=x_0, x_1,\ldots, x_{n_0}=z\}\subset \partial K$. By Theorem \ref{Lyap_est} every point of $\partial K$ is Lyapunov stable. Take $\alpha>0$ from the stability corresponding to $\delta>0$ and $x\in U\setminus K$ such that $d(x,y)<\alpha$. Note that $d_f(x,y)<\delta$. 
		Consider the following $\delta$-pseudo orbits 	 $\{\tilde{x}_k\}_{k\in\Z}$ and $\{\tilde{y}_k\}_{k\in \Z}$ that verifies Lemma \ref{po_conecta}:
		$$\tilde{x}_k=\begin{cases} f^k(x)&\mbox{ if } k<0\\f^k(x_k)&\mbox{ if } 0\leq k<n_0\\ f^k(z)& \mbox{if } k\geq n_0\end{cases},$$ and  $$\tilde{y}_k=\begin{cases}  f^k(x)&\mbox{ if } k<0\\ f^k(y)& \mbox{ if }k\geq0\end{cases}.$$
		Note that this both $\delta$-pseudo orbits are in fact $\tilde{\delta}$-pseudo orbits.  By Lemma  \ref{epsilon_expansivo_gral}, both $\tilde{\delta}$-pseudo orbits are $\tilde{\epsilon}$-shadowed by the orbit of $x$.
		Then $$d(f^k(y), f^k(z))\leq d_f(f^k(y), f^k(z))\leq d_f(f^k(y), f^k(x))+d_f(f^k(x),f^k(z))<\epsilon$$ for $k\geq n_0$ and the result follows.
	\end{proof}	
	
	We are now in position to stablish that $\partial K$ is trivial. 
	
	\begin{teo}\label{borde_trivial}
		Let $f:\R^m\to \R^m$ ($m\geq 2$) be a homeomorphism with the topological shadowing property. 
		If $K\subset \R^m$ is a global attractor, then $\partial K$ is a trivial set.
	\end{teo}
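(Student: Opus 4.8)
The plan is to combine Lemma~\ref{asymptotic_points} with the weak contraction metric of Lemma~\ref{contr_debil}, and then bootstrap from ``all pairs of boundary points collapse together'' to ``the boundary is a single point.'' First I would recall that since $K$ is a stable global attractor for $f$, it is one for every power $f^i$, and by Lemma~\ref{partial_K_conexo} the set $\partial K$ is compact and connected with $m\ge 2$; in particular $\partial K$ is $f$-invariant (since $K$ is invariant and $f$ is a homeomorphism). Applying Lemma~\ref{contr_debil} to the compact metric space $\partial K$ with $f|_{\partial K}$, we obtain an equivalent metric $d_f$ with respect to which $f|_{\partial K}$ is a weak contraction, i.e. $d_f(f(x),f(y))\le d_f(x,y)$ for all $x,y\in\partial K$.

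Next I would extract a fixed point. Since $\partial K$ is compact and nonempty and $f(\partial K)=\partial K$, the decreasing intersection argument (or simply: $f|_{\partial K}$ being a weak contraction of a compact space has nonempty chain-recurrent behavior) gives at least one point; more carefully, pick any $a\in\partial K$ and consider $\omega(a)\subset\partial K$, which is nonempty, compact, invariant. By Lemma~\ref{asymptotic_points}, any two points $a,b\in\partial K$ are asymptotic: $d(f^k(a),f^k(b))\to 0$. I claim this forces $\mathrm{diam}(\partial K)=0$. Indeed, fix $\epsilon>0$. For each pair $(a,b)$ there is $n_0(a,b)$ with $d(f^k(a),f^k(b))<\epsilon$ for $k\ge n_0$; but I want a uniform bound. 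Here is where I would use the weak contraction metric: $d_f(f^k(a),f^k(b))$ is nonincreasing in $k$, so from Lemma~\ref{asymptotic_points} (which I would restate with $d_f$ in place of $d$, legitimate since they are equivalent on the compact set $\partial K$) we get that $d_f(f^k(a),f^k(b))$ decreases to $0$; being nonincreasing and tending to $0$, it must already be $0$ at $k=0$, i.e. $d_f(a,b)=0$, hence $a=b$. Since $a,b\in\partial K$ were arbitrary, $\partial K$ is a single point.

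Let me double-check the monotonicity step, since it is the crux. For $a,b\in\partial K$ we have $f^k(a),f^k(b)\in\partial K$ for all $k\ge 0$, so $d_f(f^{k+1}(a),f^{k+1}(b))=d_f(f(f^k(a)),f(f^k(b)))\le d_f(f^k(a),f^k(b))$ by the weak contraction property; thus the sequence $k\mapsto d_f(f^k(a),f^k(b))$ is nonincreasing. By Lemma~\ref{asymptotic_points} and the equivalence of $d$ and $d_f$ on the compact set $\partial K$, this sequence converges to $0$. A nonincreasing sequence of nonnegative reals converging to $0$ is identically $0$, so $d_f(a,b)=0$, and since $d_f$ is a genuine metric equivalent to $d$, $a=b$. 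Therefore $\partial K=\{p\}$ for some $p\in\R^m$, which is the claim that $\partial K$ is trivial.

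The main obstacle I anticipate is making sure the weak contraction / asymptoticity argument is applied on the right space: Lemma~\ref{asymptotic_points} is stated for $d$, and Lemma~\ref{contr_debil} produces $d_f$ on $\partial K$; one must check $\partial K$ is compact (true, as a closed subset of compact $K$) and invariant so that the orbit $\{f^k(a)\}$ stays inside $\partial K$ where $d_f$ is defined and where the contraction estimate is valid. All of that is available from the earlier results, so the argument should go through cleanly once these compatibility points are verified.
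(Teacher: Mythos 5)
Your reduction to a single elementary claim is where the argument breaks: the statement ``a nonincreasing sequence of nonnegative reals converging to $0$ is identically $0$'' is false (take $2^{-k}$). Monotonicity of $k\mapsto d_f(f^k(a),f^k(b))$ from the weak contraction, together with Lemma~\ref{asymptotic_points}, only tells you that this sequence decreases to $0$; it gives no control on its value at $k=0$, so you cannot conclude $d_f(a,b)=0$ and hence cannot conclude $a=b$. Indeed, forward asymptoticity of all pairs is in general perfectly compatible with a nondegenerate set (think of a weak contraction pushing everything toward a fixed point), so some further input is genuinely needed; the one available here is that $\partial K$ is \emph{invariant under the homeomorphism}, so backward iterates stay in $\partial K$, and under a weak contraction backward iterates do not decrease $d_f$-distances.

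That is exactly how the paper closes the gap. One picks $x,y\in\partial K$ realizing $\mathrm{diam}_{d_f}(\partial K)>0$ (compactness), observes that $d_f(f^{-k}(x),f^{-k}(y))=d_f(x,y)$ for all $k>0$ (it cannot exceed the diameter, and applying $f^k$ and using the weak contraction shows it cannot be smaller), and then extracts $\alpha$-limit points $\tilde{x}=\lim f^{-k_i}(x)$, $\tilde{y}=\lim f^{-k_i}(y)$ in $\partial K$. For every $n>0$ one gets $d_f(f^n(\tilde{x}),f^n(\tilde{y}))=\lim_i d_f(f^{n-k_i}(x),f^{n-k_i}(y))=d_f(x,y)>0$, so $\tilde{x},\tilde{y}$ are a pair of boundary points that are \emph{not} asymptotic, contradicting Lemma~\ref{asymptotic_points}. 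Your setup (invariance and compactness of $\partial K$, Lemma~\ref{contr_debil}, Lemma~\ref{asymptotic_points}) is the right toolkit, but the argument must be run on backward iterates of a diameter-realizing pair rather than on forward iterates of an arbitrary pair.
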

	
	\begin{proof}
		Take the metric $d_f$ such that $f|_{\partial K}$ is a weak contraction.
		Suppose that $\partial K$ is nontrivial, then $\operatorname{diam}(\partial K)>0$. 
		Since $\partial K$ is compact, there exist $x, y\in \partial K$ such that $\operatorname{diam}(\partial K)=d_f(x,y)$.
		Notice that $d_f(f^{-k}(x), f^{-k}(y))=d_f(x,y)$ for every $k>0$.
		
		Let $\tilde{x}\in \alpha(x)$, there exists a sequence $k_i\to +\infty$ such that $\{f^{-k_i}(x)\}_{i\in \N}$ converges to $\tilde{x}$.
		Assume that $\{ f^{-k_i}(y)\}_{i\in\N}$ converges to some $\tilde{y}\in\alpha(y)$. 
		Then $d_f(\tilde{x}, \tilde{y})=d_f(x,y)$.
		We claim that $\tilde{x}$ and $\tilde{y}$ are not asymptotic: 
		Let $\epsilon>0$ be such that $\epsilon<d_f(x,y)$.
		For every $n>0$, we have
		\begin{eqnarray*}
			d_f(f^{n}(\tilde{x}), f^{n}(\tilde{y})) 
			&=& d_f(f^{n}(\lim_{k_i\to+\infty} f^{-k_i}(x)), f^{n}(\lim_{k_i\to+\infty} f^{-k_i}(y))) \\
			&=& \lim_{k_i\to+\infty} d_f(f^{n-k_i}(x), f^{n-k_i}(y)) \\
			&=& d_f(x,y) \\
			&>& \epsilon.
		\end{eqnarray*}
		This contradicts Lemma \ref{asymptotic_points}. 
		Therefore, $\partial K$ is trivial. 
	\end{proof}
	
	With these preliminaries in place, we  proceed to the proof of our main theorem. \\
	
	\noindent{\bf Proof of Theorem \ref{K_trivial}.}
	By Theorem \ref{borde_trivial}, the set $\partial K$ is trivial; denote $\partial K=\{p\}$. 
	Suppose that $K\neq \partial K$; then $\operatorname{int}(K)\neq \emptyset$. 
	Let $V$ be an open neighborhood of $p$, and choose points $q,r\in V$ such that $q\in \operatorname{int}(K)$ and $r\in \operatorname{int}(K^c)$. 
	Now take two distinct arcs $\gamma$ and $\gamma'$ such that $\gamma\cap\gamma'=\{q,r\}$. 
	This implies that $\gamma\cap\partial K\neq\emptyset$ and $\gamma'\cap\partial K\neq\emptyset$, but $\gamma\cap\gamma'\cap\partial K=\emptyset$. 
	Hence $\partial K$ is not trivial. A contradiction. \qed

We now derive two corollaries that follow directly from the specific topological structure of the global attractor.	

	\begin{clly}
		Let $f:\R^m\to \R^m$ ($m\geq 2$) be a homeomorphism with the topological shadowing property and a global attractor. 
		Then $f$ is topologically expansive.
	\end{clly}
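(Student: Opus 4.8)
The plan is to deduce topological expansivity directly from the triviality of the attractor together with the separation function already produced in Lemma \ref{epsilon_expansivo_gral}. By Theorem \ref{K_trivial} the attractor is trivial, say $K=\{p\}$, so $p$ is a fixed point and, by Remark \ref{K_sin_agujeros}, $\alpha(y)=\emptyset$ for every $y\in\R^m\setminus\{p\}$. Let $\epsilon\in C^+(\R^m)$ be the function provided by Lemma \ref{epsilon_expansivo_gral}; the claim is that one may take the expansivity constant to be $\alpha:=\epsilon$. Given $x\neq y$ in $\R^m$, I would split into two cases according to whether $x=p$.

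If $x\neq p$, then $x\in\R^m\setminus K$, so Lemma \ref{epsilon_expansivo_gral} applied to the pair $(x,y)$ produces $n_0$ with $d(f^{-n_0}(x),f^{-n_0}(y))>\epsilon(f^{-n_0}(x))=\alpha(f^{-n_0}(x))$, and $n=-n_0$ gives the required inequality. Note this already covers every pair in which one point is $p$ as long as $p$ is not the point whose iterate appears inside the argument of $\alpha$; in particular it covers the pair $(x,p)$ with $x\neq p$, since the hypothesis of the lemma only asks that $x\in\R^m\setminus K$.

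It remains to treat $x=p$, so $y\neq p$. Here the required inequality reads $d(p,f^n(y))>\alpha(p)$ for some $n$, where $\alpha(p)=\epsilon(p)$ is a fixed positive constant. Since $\alpha(y)=\emptyset$ by Remark \ref{K_sin_agujeros}, the backward orbit $\{f^{-k}(y)\}_{k\geq 0}$ cannot be contained in any closed ball $\overline{B(p,R)}$: otherwise it would be an infinite sequence in a compact set and would have a limit point, forcing $\alpha(y)\neq\emptyset$. Choosing $R=\alpha(p)$ yields $n_0$ with $d(f^{-n_0}(y),p)>\alpha(p)$, and since $f^{-n_0}(p)=p$ this is exactly $d(f^{-n_0}(x),f^{-n_0}(y))>\alpha(f^{-n_0}(x))$. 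This exhausts all cases, so $f$ is topologically expansive.

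I do not expect a genuine obstacle; the corollary is essentially immediate once Theorem \ref{K_trivial} is available. The one point requiring care is the asymmetry in the definition of topological expansivity — the comparison radius is evaluated at $f^n(x)$, not at $f^n(y)$ — which is precisely why the case $x=p$ cannot be reduced to Lemma \ref{epsilon_expansivo_gral} and instead relies on the fact that points of $\R^m\setminus K$ escape every bounded set under backward iteration. One should also note that the $\epsilon$ delivered by Lemma \ref{epsilon_expansivo_gral} is a bona fide element of $C^+(\R^m)$, so it is legitimate to use it as the expansivity constant. Together with Theorem \ref{K_trivial} this shows such an $f$ is in fact a TA homeomorphism, consistent with the conjugacy-to-a-dilation statement announced in the introduction.
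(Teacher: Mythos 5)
Your proof is correct and takes essentially the same route as the paper: triviality of $K$ from Theorem \ref{K_trivial} combined with the backward-separation function $\epsilon\in C^+(\R^m)$ of Lemma \ref{epsilon_expansivo_gral} used directly as the expansivity function. The only difference is that you explicitly treat the case $x=p$, where the lemma does not apply because of the asymmetry $\alpha(f^n(x))$ in the definition; the paper's two-line proof leaves this case implicit, and your argument via Remark \ref{K_sin_agujeros} (the backward orbit of $y\neq p$ escapes every closed ball since $\alpha(y)=\emptyset$) correctly closes it.
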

	
	\begin{proof}
		By Theorem \ref{K_trivial}, there exists $p\in\R^m$ such that $K=\{p\}$. 
		By Lemma \ref{epsilon_expansivo_gral}, there exists $\epsilon\in C^+(\R^m)$ such that whenever $x\in \R^m\setminus\{p\}$, $x\neq y$, there exists $n\in \N$ with 
		$d(f^{-n}(x), f^{-n}(y))>\epsilon(f^{-n}(x))$. 
		The result follows.
	\end{proof}
	
	
A fundamental problem in the study of dynamical systems is the classification of homeomorphisms under topological conjugacy. For systems possessing a global attractor and the topological shadowing property, the previous theorem provides a framework to characterize these conjugacy classes.

Historically, this classification began in the 1930s when Kerékjártó \cite{Ke34} established that a planar homeomorphism possessing an asymptotically stable fixed point is topologically conjugate, on its basin of attraction, to either $z\mapsto z/2$ or $z\mapsto \overline{z}/2$ depending on whether the map preserves or reverses orientation. Subsequently, Husch \cite{Hu71} provided necessary and sufficient conditions for a homeomorphism of $\R^m$ to be conjugate to  $x\mapsto 
x/2$  for $m\neq4,5$. 

Building upon these classical results, we now establish the classification of conjugacy classes in the presence of a compact global attractor and the topological shadowing property. Specifically, we obtain the following:
	\begin{clly}
		Let $f:\R^m\to \R^m$ ($m\neq 1,4,5$) be a orientation preserving homeomorphism with the topological shadowing property and a global attractor. 
		Then $f$ is conjugate to  $x\mapsto x/2$.
	\end{clly}
	

	
	\section{The one-dimensional case}\label{one_dim}
	In the one-dimensional case, the result does not holds. 
	By Lemma \ref{K_conexo}, the attractor $K\subset \R$ is connected and hence an interval. 
	If $K$ is not trivial, then $\partial K$ is disconnected, this is the key difference with higher dimensions. 
	In \cite{PeVa91}, a characterization of homeomorphisms with the shadowing property on the interval is given, which we recall below.
	Denote by $\mathcal{F}$ the set of fixed points of $(0,1)$ and $\mathcal{C}=\{ x\in \mathcal{F}: \text{for every } \epsilon>0, \text{ there exist } y,z\in(x-\epsilon,x+\epsilon) \text{ such that } f(y)<y \text{ and } f(z)>z\}.$
	
	\begin{teo}\label{sombra_dim_1}
		Let $f:[0,1]\to[0,1]$ be a continuous increasing function. 
		Then $f$ has the shadowing property if and only if $\mathcal{F}=\mathcal{C}$.
	\end{teo}
	
	\begin{figure}[h!]
		\centering
		\includegraphics[width=0.8\linewidth]{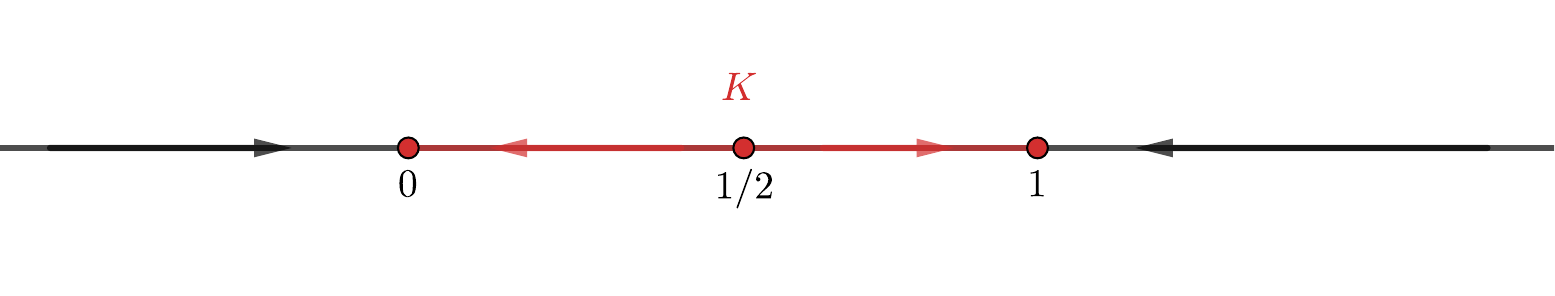}
		\caption[]{Nontrivial global attractor for a homeomorphism on the line.}
		\label{fig:at_intervalo}
	\end{figure}
	
	Consider the homeomorphism $f:\R\to \R$ such that $\operatorname{Fix}(f)=\{0,1/2,1\}$, where $0$ and $1$ are attracting and $1/2$ is repelling (see Figure \ref{fig:at_intervalo}). 
	Moreover, on $(-\infty,0]$, $f$ is a homothety of ratio $1/2$ centered at $0$, and on $[1,+\infty)$ it is a homothety of ratio $1/2$ centered at $1$. 
	
	By Theorem \ref{sombra_dim_1}, the restriction $f|_{[0,1]}$ has the shadowing property, in fact, $[0,1]$ is a nontrivial global attractor. 
	Since $f$ is a homothety on $(-\infty,0]$ and $[1,+\infty)$, it has topological shadowing on these regions as well. 
	Therefore, $f$ has topological shadowing property on $\R$.

	\bibliography{biblio}{}
	
	\bibliographystyle{plain}

\end{document}